\newtheorem{thm}{Theorem}
\newtheorem{lem}{Lemma}
\newtheorem{conj}{Conjecture}
\theoremstyle{definition}
\newtheorem{defn}[thm]{Definition}
\theoremstyle{remark}
\newcommand{\Log}{\mathop{\mathrm{Log}}\nolimits}
\newcommand{\Arg}{\mathop{\mathrm{Arg}}\nolimits}
\renewcommand{\Re}{\mathop{\mathrm{Re}}\nolimits}
\newcommand{\CC}{\mathbb{C}}
\newcommand{\RR}{\mathbb{R}}
\title{Toric Cycles in the~Complement of a Complex Curve in~$(\mathbb{C}^{\times})^2$}
\author{Alexey Lushin, Dmitry Pochekutov}
\begin{document}

\begin{abstract}
The amoeba of a~complex curve in the 2-dimensional complex
torus is its image under the~projection onto the~real subspace
in the~logarithmic scale. The complement to an~amoeba is a~disjoint 
union of connected components that are open and convex. A~toric cycle
is a~2-cycle in the~complement to a~curve associated with a~component
of the~complement to an~amoeba. We prove homological independence
of toric cycles in the~complement to  a~complex algebraic curve with amoeba of maximal area.

\end{abstract}

\maketitle

\section{Introduction}
\label{s:0}
It is difficult to overestimate the~importance of homological characteristics of algebraic hypersurfaces
and their complements to the~complex space. For instance, constructing of the~dual bases of the~homology
and the~cohomology for a~complement of an~algebraic set plays a~crucial role in the~multidimensional residue theory. To trace  the~history of these problems see \cite{Pncre, Leray, TsYg} and \cite[Sect.~13]{AizYu}.

Certain information on homological cycles in the~complement of an~algebraic set can be inferred from studying its amoeba and coamoeba. 

Given an algebraic hypersurface $V=P^{-1}(0)\cap (\mathbb{C}^{\times})^n$ defined as a zero locus in the~complex torus $(\mathbb{C}^{\times})^n=(\mathbb{C}\setminus\{0\})^n$ of a polynomial $P : \mathbb{C}^n\to\CC$, consider the~\textit{amoeba} $\mathscr{A}_V$ of $V$ (or $\mathscr{A}_P$ of $P$), i.e. the~image of $V$  under the~logarithmic mapping $$\Log(z)=(\log |z_1|,\ldots,\log |z_n|).$$
The~complement $\mathbb{R}^n\setminus\mathscr{A}_V$ consists of a finite number of connected components~$E_i$ for $i=1,\ldots, s$.
Each component $E_i$  corresponds to an integer point~$\nu\in\Delta_P$. So we denote by $E_{\nu}$ the~component $E_i$ 
(see Section~\ref{s:1} for details).

Let $x\in E_{\nu}$. Then we call an $n$-dimensional real torus $$\Gamma_{\nu}(x)=\Log^{-1}(x)$$ \textit{a~toric cycle} in $(\mathbb{C}^{\times})^n \setminus V$. We drop~$x$ in the~notation of $\Gamma_{\nu}$ since cycles $\Gamma_{\nu}(x)$ and $\Gamma_{\nu}(y)$ are homologically equivalent for $x,y\in E_{\nu}$.
When $\nu$ is a vertex of $\Delta_P$, A.G~Khovanskii and O.A.~Gelfond called $\Gamma_\nu$
the~cycle related to a vertex of the~Newton polytope. The~sum of Grothendieck residues associated to a polynomial
mapping $(P_1,\ldots, P_n):(\mathbb{C}^{\times})^n\to \mathbb{C}^n$ can be represented in~terms of such cycles for
the~hypersurface~$P=P_1\cdot\ldots \cdot P_n$~\cite{GeKh}. M.A.~Mkrtchan and A.P.~Yuzhakov in \cite{MkrtYu} proved that cycles $\Gamma_\nu$ related to vertices of $\Delta_P$  are homologically independent in the~group $\textup{H}_n((\mathbb{C}^{\times})^n \setminus V)$).

The following natural conjecture has arisen  in the~context of works \cite{Forsberg, FPT} of M.~Forsberg, M.~Passare and A.~Tsikh on amoebas of algebraic hypersurfaces. Explicitly it was stated in \cite{BuTs} as following.

\begin{conj}
	The toric cycles $\Gamma_{\nu}$ constitute a homologically independent family in the~homology group $H_n((\mathbb{C}^{\times})^n \setminus V).$
\end{conj}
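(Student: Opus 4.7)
The plan is to prove the conjecture in the case $n=2$ under the maximal-area hypothesis by constructing an explicit duality between the toric cycles and a family of meromorphic $2$-forms on $(\CC^\times)^2\setminus V$. Under the maximal-area hypothesis (Passare--Rullg\aa rd) every integer point of $\Delta_P$ is the order of a nonempty complement component $E_\nu$, so $\{\Gamma_\nu\}$ is indexed by $\Delta_P\cap\mathbb{Z}^2$ and we must produce the same number of dual cohomology classes. The natural candidates are the monomial logarithmic forms
\[
\omega_\mu \;=\; \frac{z_1^{\mu_1}z_2^{\mu_2}}{P(z_1,z_2)}\,\frac{dz_1}{z_1}\wedge\frac{dz_2}{z_2},\qquad \mu\in\Delta_P\cap\mathbb{Z}^2,
\]
which are automatically closed on $(\CC^\times)^2\setminus V$.

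Since $\Gamma_\nu$ sits inside the Reinhardt domain $\Log^{-1}(E_\nu)$, in which $1/P$ admits one specific Laurent series $\sum_{\alpha}c^{(\nu)}_\alpha z^\alpha$, the iterated Cauchy formula evaluates the pairing matrix as
\[
M_{\nu,\mu}\;=\;\int_{\Gamma_\nu}\omega_\mu\;=\;(2\pi i)^2\,c^{(\nu)}_{-\mu}.
\]
Linear independence of $\{\Gamma_\nu\}$ in $H_2((\CC^\times)^2\setminus V)$ is therefore implied by $\det M\neq 0$. For a vertex $\nu\in\Delta_P$, the classical Mkrtchan--Yuzhakov argument (factor $P=c_\nu z^\nu(1+\text{higher})$ and expand geometrically) shows that the support of the series at $E_\nu$ lies in $-\nu+K_\nu$, where $K_\nu$ is the appropriate normal cone, with leading coefficient $c_\nu^{-1}\neq 0$. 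This endows the vertex sub-block of $M$ with a triangular structure and recovers the previously known Mkrtchan--Yuzhakov case.

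The central new step is to extend this dominant-monomial analysis to every lattice point of $\Delta_P$. The maximal-area hypothesis, via the Passare--Rullg\aa rd / Mikhalkin characterization of maximal amoebas, should furnish a canonical regular subdivision of $\Delta_P$ whose vertex set is exactly $\Delta_P\cap\mathbb{Z}^2$; the spine of $\mathscr{A}_V$ is combinatorially dual to this subdivision, and each lattice point $\mu$ inherits a local normal cone that plays the role of $K_\nu$. Ordering the lattice points compatibly with this subdivision then forces $M$ to be unitriangular with non-vanishing diagonal. The principal obstacle is precisely this support estimate at non-vertex lattice points: convexity of $\Delta_P$ is no longer available, and one must extract the necessary control from the spine/tropical geometry of the maximal amoeba. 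Should the direct estimate resist, a natural fall-back is a connectedness argument --- the locus of polynomials with maximal-area amoeba of given Newton polygon is connected, so it is enough to verify $\det M\neq 0$ on a single explicit model (for instance a split product $\prod_i(z_1-a_i)\prod_j(z_2-b_j)$ in generic position), for which $M$ is diagonalized by iterated one-variable Cauchy computations.
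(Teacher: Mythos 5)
Your proposal is a genuinely different strategy from the paper's (a period/residue pairing against logarithmic $2$-forms rather than the paper's Alexander--Pontryagin duality), but it is not a proof: the step you yourself flag as ``the principal obstacle'' is exactly the new content that has to be supplied, and neither of your two routes around it works as stated. For the direct route, the triangularity of $M$ at non-vertex lattice points is not merely unproven but implausible in the form you describe: for a bounded component $E_\nu$ the recession cone is trivial, so the support of the Laurent expansion of $1/P$ on $\Log^{-1}(E_\nu)$ is not contained in any proper translated cone and there is no dominant monomial; the Mkrtchan--Yuzhakov factorization $P=c_\nu z^\nu(1+\cdots)$ has no analogue there, and the spine/subdivision combinatorics by itself does not give the support estimate $\mathrm{supp}\subset -\mu+K_\mu$ that your ordering argument needs.

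The fall-back is also broken on two counts. First, connectedness of the Harnack locus does not propagate non-vanishing of $\det M$: the entries $\int_{\Gamma_\nu}\omega_\mu$ are Laurent coefficients of $1/P$ and vary continuously but non-trivially with $P$, so $\det M\neq 0$ at one point of a connected family says nothing about the rest unless you already know $\det M$ never vanishes --- which is circular. Second, your explicit model $\prod_i(z_1-a_i)\prod_j(z_2-b_j)$ is not a Harnack polynomial: its amoeba is a union of axis-parallel lines in $\RR^2$, hence has area $0$, not $\pi^2\mathrm{Area}(\Delta_P)$, so it does not even lie in the locus you want to connect through. The paper avoids all of this by exploiting the Harnack hypothesis topologically rather than analytically: Mikhalkin--Rullg{\aa}rd's two-to-one characterization forces $\Arg\circ\Log^{-1}(\partial E_\nu)$ to be a single point (Lemmas~\ref{lem:1} and~\ref{lem:2}), so $\partial E_\nu$ lifts to a curve $\tau_\nu\subset V$, which is completed to a $1$-cycle $\sigma_\nu$ in $\overline{V\cup L}$ and paired with $\Gamma_\mu$ by linking numbers via Alexander--Pontryagin duality, giving $\mathrm{link}(\sigma_\mu,\Gamma_\nu)=\pm\delta_{\mu\nu}$ by a direct intersection count. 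If you want to salvage your approach, the missing ingredient is precisely a substitute for that lifting argument on the analytic side, and it is not supplied by the subdivision heuristic.
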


In~\cite{FPT} it was proved that the~family of cycles $\Gamma_\nu$ is a basis for the~homology group $\textup{H}_n((\mathbb{C}^{\times})^n \setminus V)$), when $V$ is a hyperplane arrangement in so-called optimal position.

In this paper we focus on the~bivariate case $n=2$. The amoeba of a~complex algebraic curve in $(\mathbb{C}^{\times})^2$
has finite area bounded from above by an~expression in terms of the~degree of the~curve \cite{RuPa}.
A bivariate polynomial~$P(z,w)$ is called \textit{Harnack} if the~amoeba of~$P$ has the~maximal area \cite{Pa}.
Complex curves defined by Harnack polynomials compose an important class since their real parts are isotopic
to Harnack curves in $(\mathbb{R}^{\times})^2=(\mathbb{R}\setminus\{0\})^2$, which arise in topics related to the~Hilbert sixteenth problem (cf. \cite{Mi}).

The main result of the~present paper is
\begin{thm}
	Let $V$ be an algebraic complex curve in $(\mathbb{C}^{\times})^2$ defined by a Harnack polynomial~$P$.
	Then the~toric cycles $\Gamma_\nu$ constitute a homologically independent family in the~homology group $H_2((\mathbb{C}^{\times})^2 \setminus V).$
\end{thm}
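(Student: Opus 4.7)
The plan is to construct a dual family of cohomology classes in $H^2((\CC^\times)^2 \setminus V;\CC)$ whose period pairing with the toric cycles $\Gamma_\nu$ is given by a non-singular matrix. For each lattice point $\mu \in \Delta_P \cap \mathbb{Z}^2$ I would introduce the holomorphic $2$-form
$$\omega_\mu = \frac{dz \wedge dw}{z^{\mu_1+1}\, w^{\mu_2+1}\, P(z,w)}$$
on $(\CC^\times)^2 \setminus V$; being a top-degree holomorphic form on a complex surface, $\omega_\mu$ is automatically $d$-closed. By the Cauchy formula applied fiberwise over $E_\nu$, the period $\int_{\Gamma_\nu}\omega_\mu$ equals $(2\pi i)^2\, c^{(\nu)}_\mu$, where $c^{(\nu)}_\mu$ is the coefficient of $z^{\mu_1}w^{\mu_2}$ in the unique Laurent expansion of $1/P$ on the tube $\Log^{-1}(E_\nu)$ guaranteed by the Forsberg--Passare--Tsikh theory cited in the introduction. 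Homological independence of $\{\Gamma_\nu\}$ then reduces to non-singularity of the square pairing matrix $M = (c^{(\nu)}_\mu)_{\mu,\nu \in \Delta_P \cap \mathbb{Z}^2}$.

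Two preliminary ingredients make the analysis of $M$ tractable. First, at the combinatorial level, I would invoke Mikhalkin's characterization: $P$ is Harnack precisely when $\RR^2 \setminus \mathscr{A}_V$ has exactly $|\Delta_P \cap \mathbb{Z}^2|$ connected components, producing a canonical bijection $\nu \leftrightarrow E_\nu$ and making $M$ genuinely square. Second, at the analytic level, the Ronkin function $N_P$ is affine on each $E_\nu$ with gradient exactly $\nu$, so standard Cauchy estimates yield an upper bound of the form $|c^{(\nu)}_\mu| \leq \exp\bigl((\mu - \nu)\cdot x_\nu - N_P(x_\nu) + o(1)\bigr)$ for any base point $x_\nu \in E_\nu$. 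In particular $c^{(\nu)}_\nu$ dominates the entries of its column when $x_\nu$ is pushed deep into $E_\nu$ in a suitable direction determined by the normal fan of $\Delta_P$ at $\nu$.

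The main obstacle is converting this asymptotic dominance into an actual proof that $\det M \neq 0$. My approach is a degeneration argument: deform $P$ through a continuous one-parameter family $P_t$ of Harnack polynomials, for instance by Viro-style patchworking, so that as $t \to 0$ each amoeba $\mathscr{A}_{V_t}$ collapses onto its spine and each component $E_\nu^{(t)}$ concentrates onto the lattice point $\nu$. In this limit each Laurent expansion on $\Log^{-1}(E_\nu^{(t)})$ is dominated by its leading monomial $z^\nu$, the pairing matrix $M_t$ becomes triangular with explicit non-zero diagonal entries computed as single tropical residues, and continuity of $\det M_t$ in the coefficients of $P_t$ along the connected Harnack locus propagates the non-vanishing back to the original $P$. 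The hardest technical point is showing that such a deformation remains inside the Harnack locus and that the base points $x_\nu^{(t)}$ can be tracked continuously without collision as $t \to 0$; here Mikhalkin's patchworking construction of Harnack curves provides exactly the one-parameter family of Harnack deformations that the argument requires, while the uniform Cauchy estimate above controls the off-diagonal entries of $M_t$ throughout the degeneration.
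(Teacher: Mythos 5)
Your strategy---pair the cycles $\Gamma_\nu$ against the forms $\omega_\mu$ and show that the Laurent-coefficient matrix $M=(c^{(\nu)}_\mu)$ is non-singular---is a genuinely different route from the paper's, which passes to $H_1(\overline{V\cup L})$ by Alexander--Pontryagin duality and exhibits explicit compact $1$-cycles $\sigma_\nu\subset\overline{V\cup L}$ with $\textup{link}(\sigma_\mu,\Gamma_\nu)=\pm\delta_{\mu\nu}$. In the paper the Harnack hypothesis enters through a concrete geometric fact (Lemmas~\ref{lem:1}--\ref{lem:2}: $\Arg\circ\Log^{-1}(\partial E_\nu)$ is a single point), which is what allows $\partial E_\nu$ to be lifted to a single closed curve $\tau_\nu$ on $V$ and completed to a dual cycle. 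Your reduction, by contrast, lands exactly on the Laurent-determinant problem of Forsberg--Passare--Tsikh, which is the hard core of the general conjecture, and the two mechanisms you propose for resolving it do not close the argument.

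First, the continuity step is logically backwards: knowing $\det M_0\neq 0$ at the degenerate end of a path and that $t\mapsto\det M_t$ is continuous does not propagate non-vanishing to $t=1$; continuity transports \emph{vanishing} to limit points, and a continuous non-constant function may well vanish somewhere along the path. (What would work is an isotopy/local-triviality argument showing that independence of the classes $[\Gamma_\nu^{(t)}]$ is locally constant along an equisingular family; but that is a different mechanism, you would still need to prove the fibration statement, and the ``collapse onto the spine'' limit is precisely a singular degeneration at which the curve acquires nodes, so the path does not stay in the locus where the $E_\nu^{(t)}$ and the forms $\omega_\mu$ behave uniformly.) Second, the diagonal-dominance claim does not follow from the Cauchy estimate as stated: the numbers $c^{(\nu)}_\mu$ are fixed constants, independent of the base point $x_\nu$, and the bound $|c^{(\nu)}_\mu|\le e^{-\langle\mu,x\rangle}\sup_{\Log^{-1}(x)}|1/P|$, even optimized over $x\in E_\nu$ entry by entry, never forces $|c^{(\nu)}_\nu|$ to exceed the other entries of its row or column for the actual polynomial $P$; indeed $c^{(\nu)}_\nu\neq0$ is itself not justified. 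Finally, nothing in your argument uses maximality of the amoeba area at a \emph{fixed} $P$---Harnackness appears only as a constraint to be preserved under deformation---whereas the paper isolates the one trigonometric consequence of Harnackness that makes a dual family constructible by hand. As written, the proposal is a plausible program rather than a proof.
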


The proof of this theorem (see Section~\ref{s:3}) is based on trigonometric properties of complex algebraic curves defined 
by Harnack polynomials (see Section~\ref{s:2}) and their projections (see Section~\ref{s:1}); it employs basic techniques of algebraic
topology.

\section{Amoebas and coamoebas of algebraic hypersurfaces}
\label{s:1}

Denote $\mathbb{C}^{\times}=\mathbb{C}\setminus\{0\}$.
Let $V=P^{-1}(0)\cap(\mathbb{C}^{\times})^n$ be an algebraic hypersurface, where $P : \mathbb{C}^n\to\mathbb{C}$ is a polynomial.
Its \textit{amoeba} $\mathscr{A}_V$ (or the~amoeba $\mathscr{A}_P$ of P) is the~image  of $V$ under the~logarithmic mapping $\Log: (\CC^\times)^n \to \RR^n$ given by the~formula
\begin{equation*}
	\Log: (z_1,\ldots,z_n) \mapsto (\log |z_1|,\ldots,\log |z_n|).
\end{equation*}

Similarly the~\textit{coamoeba} $\mathscr{A}'_V$ (or the~coamoeba $\mathscr{A}'_P$ of P) is the~image of $V$ under the~argument projection $\Arg: (\CC^\times)^n \to (-\pi, \pi]^n$ given by the~formula
\begin{equation*}
	\Arg: (z_1,\ldots,z_n)\mapsto (\arg z_1,\ldots,\arg z_n).
\end{equation*}

Fig.~1 depicts the~amoeba~$\mathscr{A}_P$ and the~coamoeba~$\mathscr{A}'_P$
for $P(z,w)=z^2w-4zw+zw^2+1$.

\begin{figure}[ht]
\label{fig:1}
\begin{center}
\includegraphics[scale=.53]{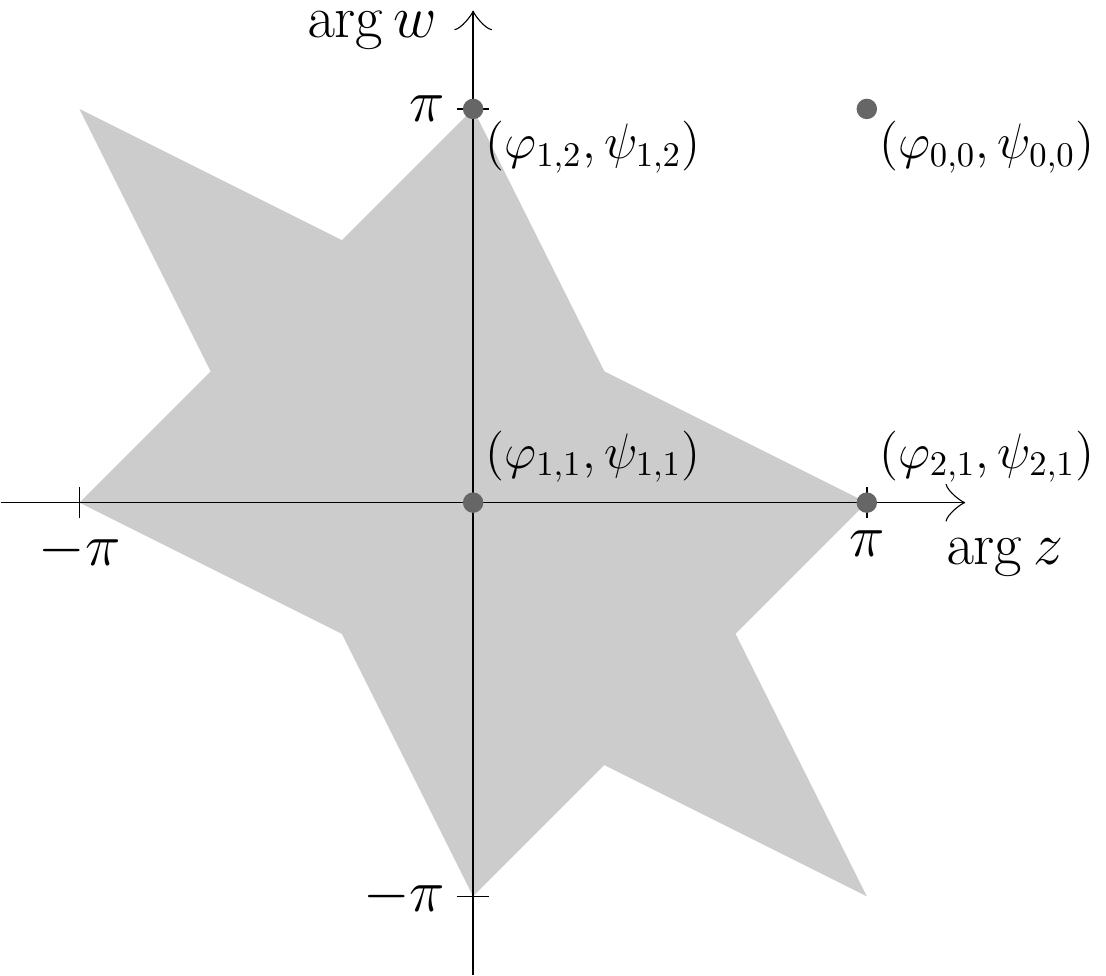}
\includegraphics[scale=.49]{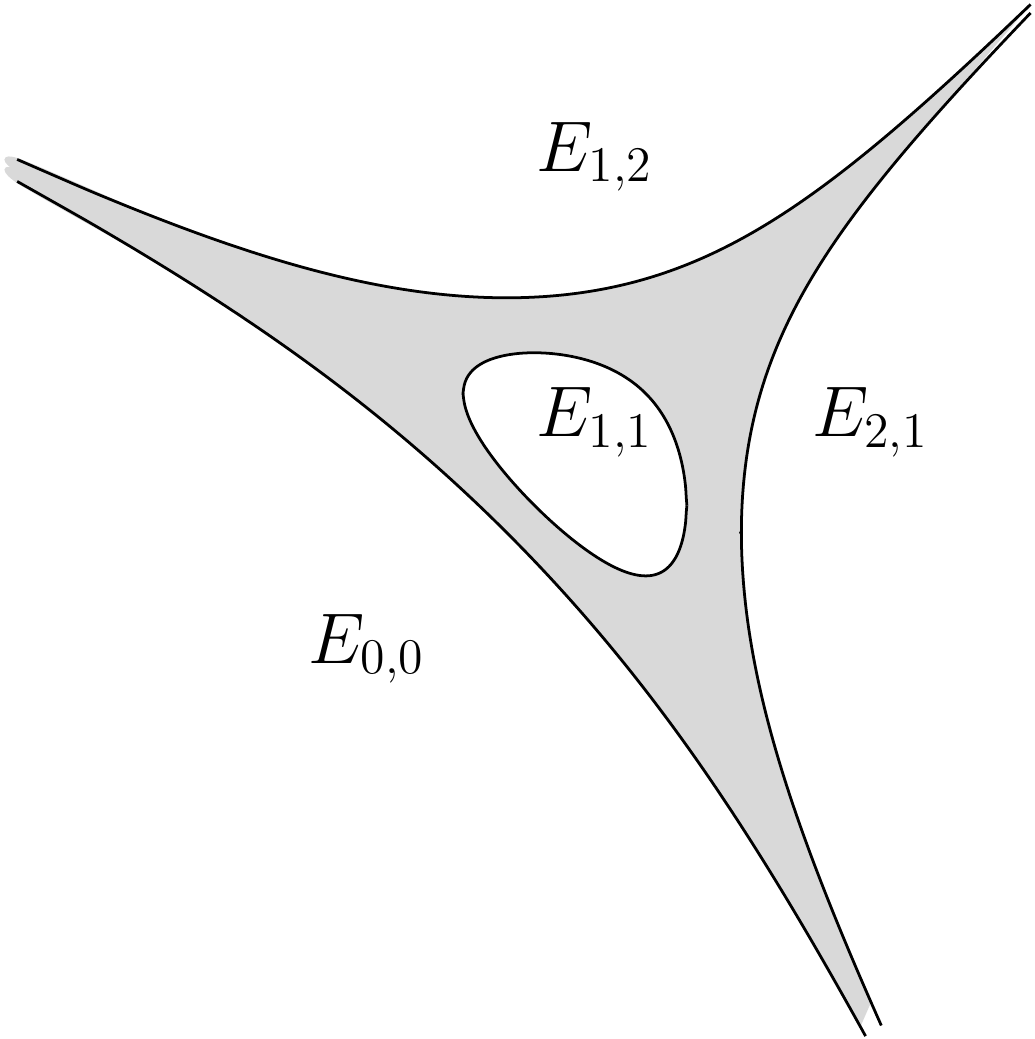}
\caption{\small  The coamoeba (left) 
and the~amoeba with its complement components $E_\nu$ (right) for the~polynomial
$P(z,w)=z^2 w - 4zw + zw^2 + 1$.}
\end{center}
\end{figure}

Since $\mathscr{A}_V$ is a closed subset of $\mathbb{R}^n$, the~complement $\mathbb{R}^n\setminus\mathscr{A}_V$ is open.
It consists of a finite number of connected components $E_i$, which are convex \cite[Section~6.1]{GKZ}.
The structure of $\mathbb{R}^n\setminus\mathscr{A}_V$ can be read from the~Newton polytope $\Delta_P$ of a polynomial~$P,$ i.e. the~convex hull in $\mathbb{R}^n$ of the~list of exponents of terms present in $P$.
Recall that the~dual cone at a point $\nu\in\Delta_P$ to $\Delta_P$ is
\begin{equation*}
	C^\vee_\nu(\Delta_P) = \{ s\in\RR^n : \langle s,\nu \rangle = \max_{\alpha\in\Delta_P} \langle s,\alpha \rangle \}.
\end{equation*}
A recession cone~$C(E)$ of the~convex set~$E$ is the~maximal cone that can be put inside~$E$ by a translation.

The following theorem is a summary of Propositions~2.4, 2.5 and 2.6 in~\cite{FPT}.

\begin{thm}
	There exists an injective order mapping
		\begin{equation*}
		\textup{ord}: \{E\}\to \Delta_f\cap \mathbb{Z}^n
		\end{equation*}
	such that the~dual cone $C^\vee_{\nu}(\Delta_P),$ $\nu=\textup{ord}(E)$, is the~recession cone of $E$.
\end{thm}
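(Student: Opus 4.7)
The plan is to encode the components of the amoeba complement via the \emph{Ronkin function} of $P$:
\begin{equation*}
\Np(x) = \frac{1}{(2\pi i)^n}\int_{\Log^{-1}(x)} \log|P(z)|\,\frac{dz_1\wedge\cdots\wedge dz_n}{z_1\cdots z_n}.
\end{equation*}
This function is continuous and convex on $\RR^n$ by plurisubharmonicity of $\log|P|$. On any component $E$ of $\RR^n\setminus\mathscr{A}_P$ the fiber $\Log^{-1}(x)$ avoids $V$, so differentiation under the integral is legitimate and yields
\begin{equation*}
\frac{\partial\Np}{\partial x_j}(x) = \frac{1}{(2\pi i)^n}\int_{\Log^{-1}(x)} \frac{z_j P_{z_j}(z)}{P(z)}\,\frac{dz_1\wedge\cdots\wedge dz_n}{z_1\cdots z_n},
\end{equation*}
which, by the argument principle applied fiber-wise, is a locally constant integer. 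Hence $\Np$ is affine with integer gradient on each $E$, and I define $\textup{ord}(E):=\nabla\Np|_E\in\mathbb{Z}^n$.

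To see $\textup{ord}(E)\in\Delta_P$, I would combine the Jensen bound $\Np(x)\le\log\sum_\alpha|c_\alpha|e^{\langle\alpha,x\rangle}$ with the affine behavior on $E$: moving in any direction $s$ gives $\langle\textup{ord}(E),s\rangle\le\max_{\alpha\in\Delta_P}\langle\alpha,s\rangle$, and convex separation places $\textup{ord}(E)$ inside $\Delta_P$. For injectivity, suppose two distinct components $E,E'$ shared the same $\nu=\textup{ord}(E)=\textup{ord}(E')$. Then $\Np(x)-\langle\nu,x\rangle$ is convex and constant on each of $E,E'$; convexity together with the strict convexity of $\Np$ contributed by fibers that meet $V$ rules out two distinct affine pieces with the same slope and same intercept lying in separate components of the amoeba complement, since the straight segment joining them would cross the amoeba and pick up strict convexity.

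Finally, to identify the recession cone of $E_\nu$ with $C^\vee_\nu(\Delta_P)$, I would prove both inclusions. For $C^\vee_\nu(\Delta_P)\subseteq C(E_\nu)$, expand $1/P$ as a Laurent series in the chart where the monomial $c_\nu z^\nu$ is dominant; convergence of this expansion characterizes $E_\nu$ as the domain where inequalities $\langle\nu,x\rangle-\langle\alpha,x\rangle\ge\log|c_\alpha/c_\nu|$ hold with enough slack, and every $s\in C^\vee_\nu(\Delta_P)$ preserves these inequalities along the ray $x_0+ts$, $t\ge 0$. For $C(E_\nu)\subseteq C^\vee_\nu(\Delta_P)$, I would compare the linear asymptotics $\Np(x_0+ts)=\langle\nu,x_0+ts\rangle+c$ on such a ray with the universal growth bound at rate $\max_\alpha\langle\alpha,s\rangle$; equality $\langle\nu,s\rangle=\max_\alpha\langle\alpha,s\rangle$ is forced, giving $s\in C^\vee_\nu(\Delta_P)$. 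The main obstacle is the first inclusion: matching the domain of convergence of the $\nu$-adapted Laurent expansion of $1/P$ exactly to the component $E_\nu$ requires careful combinatorial bookkeeping of which subsets of terms of $P$ can be summed geometrically, and this is where the structure of $\Delta_P$ genuinely enters the analysis.
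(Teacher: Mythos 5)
The paper itself does not prove this statement: it is quoted as a summary of Propositions 2.4--2.6 of \cite{FPT}, with the Ronkin-function description of $\textup{ord}$ recalled afterwards from \cite{Ronkin}. So your proposal has to be measured against the standard proof in those references. Your overall route --- define $\textup{ord}(E)$ as the integer, locally constant gradient of $\mathcal{N}_P$ on $E$, place it in $\Delta_P$ by comparison with the support function of $\Delta_P$, and identify the recession cone via the $\nu$-centered Laurent expansion of $1/P$ --- is exactly that standard route, and those parts are essentially sound (for the containment $\textup{ord}(E)\in\Delta_P$ you should invoke the global subgradient inequality $\mathcal{N}_P(y)\ge\mathcal{N}_P(x)+\langle\nu,y-x\rangle$ for all $y$ rather than ``moving in direction $s$'', since $E$ may be bounded).

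The genuine gap is injectivity. You exclude two components $E\neq E'$ with the same order $\nu$ by appealing to ``the strict convexity of $\mathcal{N}_P$ contributed by fibers that meet $V$'' along the segment joining them. That strict convexity is not established, and it is false in the form you need: $\mathcal{N}_P$ can be affine along segments that meet, or even lie inside, the amoeba (for $P=z-1$ in $(\CC^{\times})^2$ one has $\mathcal{N}_P(x_1,x_2)=\max(0,x_1)$, which is affine along every segment contained in the line $x_1=0=\mathscr{A}_P$). All that one-variable convexity gives you is that the directional derivative of $\mathcal{N}_P$ along the segment is constant, which by itself does not prevent the segment from crossing the amoeba. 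The standard repair uses the very tool you reserve for the recession-cone step: both $\Log^{-1}(E)$ and $\Log^{-1}(E')$ lie in the domain of convergence of the same $\nu$-centered Laurent series of $1/P$; that domain is logarithmically convex, so its $\Log$-image is convex and contains the segment joining $E$ and $E'$; since $1/P$ is holomorphic there, that image avoids $\mathscr{A}_P$, forcing $E$ and $E'$ into one connected component of the complement --- a contradiction. With injectivity rerouted this way (and with the convergence-domain bookkeeping you already flag as the hard part carried out once, since it serves both injectivity and the recession-cone identification), your sketch assembles into the proof of \cite{FPT}.
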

So the~theorem states that one can encode a component $E$ of the~complement $\mathbb{R}^n\setminus\mathscr{A}_V$ as $E_\nu$,
where $\nu\in\Delta_P\cap \mathbb{Z}^n$ and $\nu=\textup{ord}(E).$
We refer readers to Fig.~1 to observe this correspondence for the~hypersurface defined by $P(z,w)=z^2 w - 4zw + zw^2 + 1$. Its Newton polytope~$\Delta_P$ is the~convex hull
in $\mathbb{R}^2$ of points $(0,0)$, $(1,2)$, $(1,1)$ and $(2,1)$. 

The order mapping $\textup{ord}$ can be defined in terms of the~\textit{Ronkin} 
\textit{function}~$\mathcal{N}_P:\mathbb{R}^n\to\mathbb{R}$, that is the~mean value integral
\begin{equation}
	\mathcal{N}_P(x) = \frac{1}{(2\pi i)^n} \int\limits_{\Log^{-1}(x)} 
	\log\big|P(z_1,\ldots,z_n)\big| 
	\frac{dz_1\wedge\ldots\wedge dz_n}{z_1\cdot\ldots\cdot z_n},
\end{equation}
where $x=(x_1,\ldots, x_n)\in \mathbb{R}^n$. It is affine linear in a~component~$E$ of the~complement~$\mathbb{R}^n\setminus\mathscr{A}_V$.
Moreover, the~gradient $ \left. \textup{grad}\,\mathcal{N}_P \right|_E$ is equal to $\textup{ord}(E)$ \cite{Ronkin}.

We define \textit{a~toric cycle} in $(\mathbb{C}^{\times})^n\setminus V$
related to $E_\nu$ to be an $n$-dimensional real torus
\begin{equation*}
	\Gamma_\nu=\textup{Log}^{-1}(x)=\{z:|z_1|=e^{x_1},\ldots, |z_n|=e^{x_n}\},
	\end{equation*}
$x=(x_1,\ldots, x_n)\in E_\nu$. Since $E_{\nu}$ is convex, it contains the segment $[x;y]$ for any pair~$x,y\in E_\nu$.
It follows $\Gamma_{\nu}(y)-\Gamma_{\nu}(x)=\partial\,\textup{Log}^{-1}[x;y]$, and so $\Gamma_{\nu}(x)$ 
is homologically equivalent to $\Gamma_{\nu}(y)$, therefore we drop $x$ in notation of~$\Gamma_{\nu}$.

\section{Harnack polynomials and their amoebas}
\label{s:2}
An amoeba is a closed but non-compact subset of $\mathbb{R}^n$. Nevertheless, amoebas in $\mathbb{R}^2$
have finite areas. Moreover, M.~Passare and H.~Rullg\aa rd showed in~\cite{RuPa} that
\begin{equation*}
	Area(\mathscr{A}_P)\leq \pi^2 Area(\Delta_P)
\end{equation*}
for a bivariate polynomial~$P(z,w)$.

\begin{defn}[\cite{Pa}]
A polynomial $P:\mathbb{C}^2\to \mathbb{C}$ is called \textit{Harnack} if its Newton polygon~$\Delta_P$ has a non-zero
area, and the~area of its amoeba is maximal, i.e.
\begin{equation}
\label{eq:areas}
	Area(\mathscr{A}_P)=\pi^2 Area(\Delta_P).
\end{equation}
\end{defn}
Note that G.~Mikhalkin showed in~\cite{Mi} that for a given lattice polygon $\Delta$ one can construct a polynomial~$P(z,w)$
with $\Delta_P=\Delta$, such that equality~(\ref{eq:areas}) holds. The amoeba depicted in Fig.~1 belongs to the~Harnack
polynomial $P(z,w)=z^2w-4zw+zw^2+1$.

At first glance, this notion looks to be far from geometry.  However, the~next statement shows its interactions with the~real topology.

\begin{thm}[Mikhalkin-Rullg\aa rd \cite{MiRu}]
\label{thm:Harnack}
Let the~Newton polygon $\Delta_P$ of a polynomial~$P(z,w)$ have a non-zero area. Then the~following three conditions are equivalent:
\begin{enumerate}
\item The amoeba $\mathscr{A}_P$ has maximal area. 
\item There are constants $a,b,c\in \mathbb{C}^{\times}$ such that $a P(bz,cw)$  has real coefficients.
			The logarithmic mapping $\Log: V\to\mathbb{R}^2$ is at most two-to-one, where $V=P^{-1}(0)\cap (\mathbb{C}^{\times})^2$.
\item There are constants $a,b,c\in \mathbb{C}^{\times}$ such that $a P(bz,cw)$  has real coefficients. The corresponding real algebraic curve is a Harnack curve for the~polygon~$\Delta_P$.
\end{enumerate}
\end{thm}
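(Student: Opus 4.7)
The plan is to route all three equivalences through the Ronkin function~$\mathcal{N}_P$ and a careful fibrewise analysis of $\Log|_V$. Two structural facts about $\mathcal{N}_P$ do most of the work: it is convex on~$\mathbb{R}^2$ and affine on each component of $\mathbb{R}^2\setminus\mathscr{A}_P$, and a change-of-variables calculation using that $\textup{grad}\,\mathcal{N}_P$ is the (piecewise) order map onto $\Delta_P$ yields
\begin{equation*}
\int_{\mathscr{A}_P} \det D^2\mathcal{N}_P(x)\,dx \;=\; \textup{Area}(\Delta_P).
\end{equation*}
The Passare--Rullg\aa rd bound $\textup{Area}(\mathscr{A}_P)\le \pi^2\textup{Area}(\Delta_P)$ is therefore equivalent to the pointwise estimate $\det D^2\mathcal{N}_P(x)\ge 1/\pi^2$ almost everywhere on the interior of~$\mathscr{A}_P$, and the maximal-area condition~(1) becomes the statement that this estimate is saturated a.e.

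Next I would prove this pointwise Monge--Amp\`ere inequality by differentiating twice under the integral sign in the definition of~$\mathcal{N}_P$. This expresses the entries of $D^2\mathcal{N}_P(x)$ as integrals over the real torus $\Log^{-1}(x)$ whose integrand concentrates on the fibre $\Log^{-1}(x)\cap V$; a Cauchy--Schwarz-type inequality then reduces the bound to a statement about the cardinality of that fibre and the image of the logarithmic Gauss map of~$V$. Saturation of the inequality forces both that the fibre consists of exactly two points and that those two points are exchanged by a suitable antiholomorphic involution. This is the technical heart of the proof and the step I expect to be the main obstacle.

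With the equality analysis in hand, (1)$\Leftrightarrow$(2) drops out. The ``at most two-to-one'' half of~(2) is precisely the saturation condition. For the ``real coefficients'' half, the antiholomorphic involution that swaps a generic conjugate pair in the fibre has the form $(z,w)\mapsto(\lambda\bar z,\mu\bar w)$ for some $\lambda,\mu\in\CC^{\times}$; choosing $b,c\in\CC^{\times}$ of appropriate modulus and argument to absorb $\lambda,\mu$, and then normalising with an overall factor~$a$, makes $aP(bz,cw)$ invariant under honest complex conjugation and hence real.

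Finally, (2)$\Leftrightarrow$(3) is a classical input from real algebraic geometry: for a real polynomial with Newton polygon~$\Delta_P$, the real zero locus is a Harnack curve for~$\Delta_P$ (maximal number of ovals in the correct cyclic arrangement along the toric boundary) precisely when the complex projection $\Log\colon V\to\mathbb{R}^2$ is generically at most two-to-one. This is a topological fact that translates the Harnack condition into the fibre count already extracted from~(1), closing the chain of equivalences. Everything besides the pointwise Monge--Amp\`ere estimate of the second paragraph is topological and algebraic bookkeeping built on top of that core inequality.
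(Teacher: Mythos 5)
This theorem is not proved in the paper at all: it is imported verbatim from Mikhalkin--Rullg\aa rd \cite{MiRu} (with the equivalence $(2)\Leftrightarrow(3)$ resting on Mikhalkin's earlier work \cite{Mi}), so there is no in-paper argument to compare your proposal against. Measured against the actual proof in the cited source, your skeleton is the right one: the total Monge--Amp\`ere mass of $\mathcal{N}_P$ equals $\Area(\Delta_P)$ because the gradient image is $\Delta_P$; the Passare--Rullg\aa rd estimate is a pointwise lower bound of the Monge--Amp\`ere measure by $\pi^{-2}$ times Lebesgue measure on the amoeba; and maximality of area is the saturation of that bound, whose equality case is analysed through the fibres of $\left.\Log\right|_V$ and the logarithmic Gauss map, leading to the antiholomorphic involution $(z,w)\mapsto(\lambda\bar z,\mu\bar w)$ and hence to real coefficients after rescaling. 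Quoting $(2)\Leftrightarrow(3)$ as a known characterisation of Harnack curves is also legitimate, though it is Mikhalkin's theorem from \cite{Mi} rather than a classical fact.

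The genuine gap is exactly where you flag it. First, a logical slip: the bound $\Area(\mathscr{A}_P)\le\pi^2\Area(\Delta_P)$ is \emph{implied by}, not equivalent to, the pointwise estimate $\det D^2\mathcal{N}_P\ge\pi^{-2}$; only the implication from pointwise to global is available for free, and the pointwise estimate itself is the whole content of Passare--Rullg\aa rd's theorem. Second, $\mathcal{N}_P$ is not $C^2$ on the amoeba, so $\det D^2\mathcal{N}_P$ must be interpreted as a Monge--Amp\`ere \emph{measure}; ``differentiating twice under the integral sign'' produces divergent integrands where the torus $\Log^{-1}(x)$ meets $V$, and the correct route is the identification of this measure with a pushforward from $V$ of a density built from the logarithmic Gauss map. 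Third, the ``Cauchy--Schwarz-type inequality'' and the claim that saturation forces a two-point fibre exchanged by an antiholomorphic involution are asserted rather than derived; this equality analysis is the technical core of \cite{MiRu} and does not follow from the setup you describe. As written, the proposal is a correct roadmap of the published proof but not a proof.
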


Point out the~properties of amoebas of Harnack polynomials that are important in our study.

\begin{lem}\label{lem:1}
	Given a~Harnack polynomial $P(z,w)$ with real coefficients, let $E_\nu$ be a~component of the~complement $\mathbb{R}^2\setminus\mathscr{A}_V$.
	Then the~image $\Arg\circ\Log^{-1}(\partial E_\nu)$ consists of a~single point~$(\varphi_\nu,\psi_\nu)$ from the~set
	\begin{equation*}
		\Theta = \{(0,0),\,(0,\pi),\,(\pi,0),\,(\pi,\pi)\}.
	\end{equation*}
\end{lem}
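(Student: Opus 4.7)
The plan is to identify $(\Log|_V)^{-1}(\partial E_\nu)$ with a subset of the real part $V_{\mathbb{R}}:=V\cap(\mathbb{R}^\times)^2$, and then to argue by connectedness that this subset lies in a single orthant of $(\mathbb{R}^\times)^2$.

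Since $P$ has real coefficients by hypothesis, $V$ is invariant under the complex conjugation $\sigma\colon(z,w)\mapsto(\bar z,\bar w)$, and its fixed locus in $(\mathbb{C}^\times)^2$ is precisely $V_{\mathbb{R}}$. The four connected components of $(\mathbb{R}^\times)^2$ are the open sign orthants, so every connected component of $V_{\mathbb{R}}$ lies in a single one of them; on such a component the map $\Arg$ is accordingly constant with value in $\Theta$. It therefore suffices to prove that $(\Log|_V)^{-1}(\partial E_\nu)$ is contained in a single connected component of $V_\mathbb{R}$.

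To see that the preimage actually lies in $V_\mathbb{R}$, I would appeal to condition~(2) of Theorem~\ref{thm:Harnack}: the logarithmic map $\Log|_V$ is at most two-to-one. A generic fiber over the interior of $\mathscr{A}_V$ consists of a pair $\{p,\sigma(p)\}$ of $\sigma$-conjugate points, and the two sheets can only coalesce along the critical set of $\Log|_V$, which is exactly $V_\mathbb{R}$. Combined with the equality of areas~(\ref{eq:areas}), this forces the set of critical values to fill out precisely $\partial\mathscr{A}_V$; consequently $(\Log|_V)^{-1}(\partial E_\nu)\subset V_\mathbb{R}$, and moreover $\Log|_{V_\mathbb{R}}\colon V_\mathbb{R}\to\partial\mathscr{A}_V$ is a homeomorphism.

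Since $E_\nu$ is a nonempty open convex subset of $\mathbb{R}^2$, its boundary $\partial E_\nu$ is connected. Pulling back through the homeomorphism $\Log|_{V_\mathbb{R}}$ yields a connected subset of $V_\mathbb{R}$, which then lies in a single connected component of $V_\mathbb{R}$, hence in a single orthant of $(\mathbb{R}^\times)^2$. The constant value of $\Arg$ on that orthant is the required point $(\varphi_\nu,\psi_\nu)\in\Theta$. The main obstacle I anticipate is the second step: rigorously establishing that the critical values of $\Log|_V$ exhaust $\partial\mathscr{A}_V$ and that $\Log|_{V_\mathbb{R}}$ is a bijection onto the boundary. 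This is where the full force of the Harnack condition is needed, combining the two-to-one statement (condition~(2)) with the topological structure of the real curve as a Harnack curve (condition~(3) of Theorem~\ref{thm:Harnack}).
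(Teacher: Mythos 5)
Your proposal is correct and follows essentially the same route as the paper: both arguments first use the at-most-two-to-one property of $\Log|_V$ together with conjugation invariance of $V$ to show that the (single-point) fiber over each point of $\partial E_\nu$ is real, and then use connectedness of the lift of $\partial E_\nu$ to force all these real points into one orthant of $(\mathbb{R}^{\times})^2$, hence one value of $\Arg$ in $\Theta$. The paper phrases the second step as an intermediate-value argument on $\Re z\circ b$ and $\Re w\circ b$ along a path in the lift, which is just the concrete form of your observation that a connected subset of $(\mathbb{R}^{\times})^2$ lies in a single sign orthant.
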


\begin{proof}
Consider a~Harnack polynomial $P(z,w)$ and the~complex curve~$V$ in 
$(\mathbb{C}^{\times})^2$ defined by~$P$. The boundary~$\partial \mathscr{A}_V$
of its amoeba consists of fold critical points of the~projection
$\left.\textup{Log}\right|_V: V\to \mathscr{A}_V$. For each point on the~boundary $\partial \mathscr{A}_V$, its preimage by $\left.\textup{Log}\right|_V$ is a point, while for a point in the interior of~$\mathscr{A}_V$ the~preimage consists of two points.

Now suppose that $(x_0,y_0)$ is a point on the~boundary~$\partial E_\nu$ of a~component~$E_\nu$. Then the~real torus $\textup{Log}^{-1}(x_0,y_0)$ intersects the~curve~$V$ in a~unique point $(z_0,w_0)$.
\begin{figure}[h]
\label{fig:2}
\centering
\includegraphics[scale=.75]{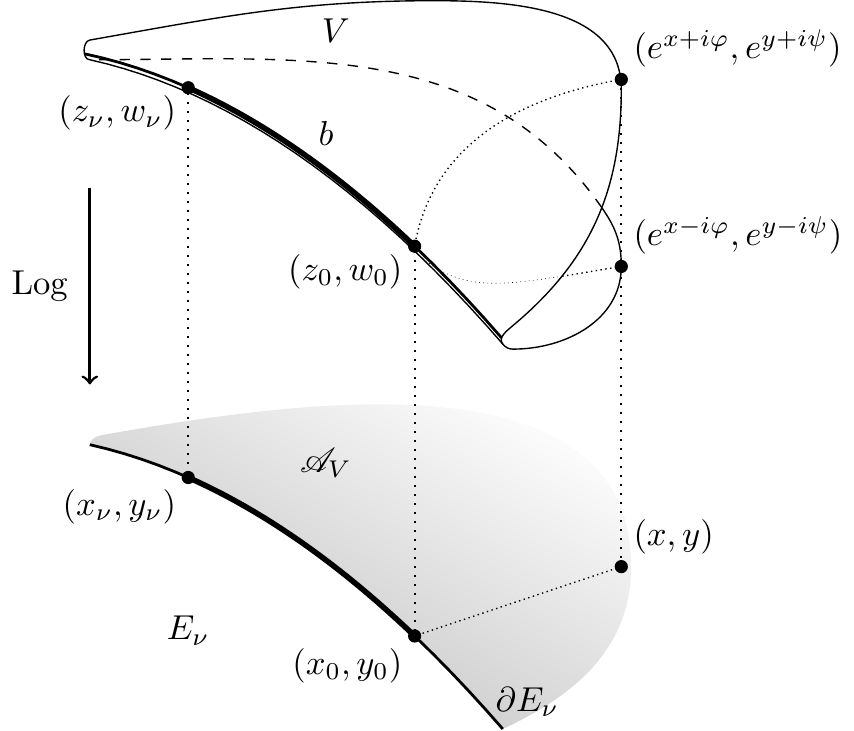}
\caption{\small  Illustration to the~proof of Lemma~\ref{lem:1}.}
\end{figure}

The polynomial~$P$ has real coefficients by the hypothesis, so that
the~complex conjugate $(\overline{z}_0,\overline{w}_0)$ of the point $(z_0,w_0)$ lies on $V$ also. 
The~logarithmic projection~$\left.\textup{Log}\right|_V: V\to \mathscr{A}_V$ maps the~conjugate to the~same point $(x_0,y_0)$ on the~boundary~$\partial E_\nu$. Thus, the points $(z_0,w_0)$ and $(\overline{z_0},\overline{w_0})$ coincide. 
The point $(z_0,w_0)$ is real, and $\Arg\circ\Log^{-1}(x_0,y_0)$ is a~point~$(\varphi_0, \psi_0)$ in $\Theta$.

Assume that $(x_\nu,y_\nu)=\Log(z_\nu,w_\nu)$ is a point on $\partial E_\nu$ such that the~point $\Arg\circ\Log^{-1}(x_\nu,y_\nu)$ belongs to $\Theta\setminus\{(\varphi_0,\psi_0)\}$.
We shall show that this leads to a contradiction. 

Consider a continuous path $b : [0;1]\to V$ from $b(0)=(z_0,w_0)$ to $b(1)=(z_\nu,w_\nu)$ such that $b([0;1])=V\cap\Log^{-1}[(x_0,y_0);(x_\nu,y_\nu)]$, where $[(x_0,y_0);(x_\nu,y_\nu)]$ is an arc on $\partial E_\nu$ bounded by $(x_0,y_0)$ and $(x_\nu,y_\nu)$.
At least one of functions $\Re z \circ b$, $\Re w \circ b$ has values of different signs at $t=0$ and $t=1$.
To be definite, assume that $(\Re z \circ b)(0)\cdot(\Re z \circ b)(1)$ is negative.
So there is $\tilde{t}_0\in (0;1)$ with $\Re z (b(\tilde{t}_0))=0$, i.e. a point $(x_1,y_1)$
which lifts to the~point $(0,w_1)=b(t_0)$ on $V$. However, $V$ is defined as a zero locus of~$P$ in $(\mathbb{C}^{\times})^2$.
This contradiction completes the~proof.
\end{proof}

The four marked points $(\varphi_\nu, \psi_\nu)$ on Fig.~1 exhaust the family~$\Theta$ for the~Harnack polynomial $P(z,w)=z^2w-4zw+zw^2+1$.

When $P$ is a normalized Harnack polynomial, that is a~Harnack polynomial with real coefficients and some special condition (see~\cite{Pa} for a definition), M.~Passare in~\cite{Pa} gave an explicit formula
for amoeba-to-coamoeba mapping
\begin{equation*}
	\Arg\circ\Log^{-1}(x,y)
	=\left(\pm\pi\frac{\partial\mathcal{N}_P}{\partial y}(x,y), \mp\pi\frac{\partial\mathcal{N}_P}{\partial x}(x,y)\right),
\end{equation*}
which proves Lemma~\ref{lem:1} in the~corresponding case.

In general, one has
\begin{lem}\label{lem:2}
	Given a Harnack polynomial $P(z,w)$, let $E_\nu$ be a component of the~complement $\RR^n\setminus\mathscr{A}_V$.
	Then the~image $\Arg\circ\Log^{-1}(\partial E_\nu)$ is a single point~$(\varphi_\nu,\psi_\nu)$.
\end{lem}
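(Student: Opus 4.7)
\medskip

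\noindent\textbf{Proof proposal.} The plan is to reduce Lemma~\ref{lem:2} to Lemma~\ref{lem:1} by a monomial change of coordinates on the complex torus, using the equivalence supplied by Theorem~\ref{thm:Harnack}. Since $P$ is Harnack, part~(2) of that theorem yields constants $a,b,c\in\CC^\times$ such that $\widetilde{P}(z,w):=aP(bz,cw)$ has real coefficients. The Newton polygon of $\widetilde{P}$ coincides with $\Delta_P$, so $\widetilde{P}$ is again a Harnack polynomial with real coefficients, and Lemma~\ref{lem:1} applies to it.

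\smallskip

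Next I would record how the monomial substitution $\Phi\colon (z,w)\mapsto (bz,cw)$ acts on the two projections. A direct computation gives
\begin{equation*}
\Log\circ\Phi = \Log + (\log|b|,\log|c|),\qquad
\Arg\circ\Phi \equiv \Arg + (\arg b,\arg c)\pmod{2\pi}.
\end{equation*}
Hence $\Phi$ restricts to a biholomorphism $\widetilde{V}:=\widetilde{P}^{-1}(0)\cap(\CC^\times)^2\to V$, and it carries $\mathscr{A}_{\widetilde{P}}$ (respectively $\cAv$ restricted to $\widetilde{V}$) to $\mathscr{A}_V$ (respectively to $\cAv$) by a translation of $\RR^2$ (respectively of the argument torus $(-\pi,\pi]^2$).

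\smallskip

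Translation is a homeomorphism that preserves convex components, so it induces a bijection between the components of $\RR^2\setminus\mathscr{A}_{\widetilde{P}}$ and those of $\RR^2\setminus\mathscr{A}_V$: each $E_\nu$ is the translate by $(\log|b|,\log|c|)$ of a unique component $\widetilde{E}_\nu$, and the boundaries correspond. Applying Lemma~\ref{lem:1} to $\widetilde{P}$, the image $\Arg\circ\Log^{-1}(\partial\widetilde{E}_\nu)$ consists of a single point $(\widetilde{\varphi}_\nu,\widetilde{\psi}_\nu)\in\Theta$. Transporting through $\Phi$ and using the relations above,
\begin{equation*}
\Arg\circ\Log^{-1}(\partial E_\nu) = \bigl(\widetilde{\varphi}_\nu+\arg b,\ \widetilde{\psi}_\nu+\arg c\bigr)\pmod{2\pi},
\end{equation*}
which is again a single point $(\varphi_\nu,\psi_\nu)$, as required.

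\smallskip

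The argument is almost entirely bookkeeping; the only point requiring care is verifying that the boundary $\partial E_\nu$ is carried to $\partial\widetilde{E}_\nu$ by the translation and that the fibrewise action of $\Phi$ on arguments is well-defined modulo $2\pi$, so that a single point of the argument torus is sent to a single point. Neither of these is a genuine obstacle, so this reduction yields Lemma~\ref{lem:2} essentially for free from Lemma~\ref{lem:1}. Note that, in contrast with the real-coefficient case, the point $(\varphi_\nu,\psi_\nu)$ need no longer lie in $\Theta$.
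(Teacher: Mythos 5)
Your proposal is correct and takes essentially the same route as the paper's own proof: both use Theorem~\ref{thm:Harnack} to pass to a real-coefficient polynomial $\widetilde{P}(z,w)=aP(bz,cw)$, apply Lemma~\ref{lem:1} to it, and transport the conclusion back through the monomial substitution, which translates the amoeba by $(\log|b|,\log|c|)$ and the coamoeba by $(\arg b,\arg c)$. Your write-up is in fact slightly more careful than the paper's, since you explicitly note that $\widetilde{P}$ is again Harnack (so Lemma~\ref{lem:1} genuinely applies) and that the resulting point lies in $\Theta+(\arg b,\arg c)$ rather than in $\Theta$ itself.
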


\begin{proof}
By Theorem~\ref{thm:Harnack} there exist $a,b,c\in\CC^\times$ such that $\widetilde{P}(z,w)=aP(bz,cw)$ has real coefficients.
Applying Lemma~\ref{lem:1} to $\widetilde{P}(z,w)$ one gets that the~image of $\partial \widetilde{E}_\nu$, where $\widetilde{E}_\nu$ is the~component of $\RR^2\setminus\mathscr{A}_{\widetilde{P}}$, by the~map $\Arg\circ\Log^{-1}$ is a point from $\Theta$.

Multiplication by a non-zero constant $a$ does not affect the~zero locus $\widetilde{V}$ of the~polynomial $\widetilde{P}$.
The linear transformation $(z,w)\mapsto(bz,cw)$ induces a translation of $\mathscr{A}_{\widetilde{P}}$ by a vector $(\log|b|,\log|c|)$ and a translation of $\mathscr{A}'_{\widetilde{P}}$ by $(\arg(b),\arg(c))$.
So $\Arg\circ\Log^{-1}(\partial E_\nu)$ is a point in $\Theta+(\arg(b),\arg(c))$.
\end{proof}

\section{Proof of the main result}
\label{s:3}

Let $\overline{\mathbb{C}^2}=\mathbb{C}^2\cup\{\infty\}$, and $L=L_1\cup L_2$ be the~union of two lines $\{z=0\}$ and $\{w=0\}$ in $\mathbb{C}^2=\mathbb{C}_z\times\mathbb{C}_w$. Since $\overline{\mathbb{C}^2}$ is homeomorphic to the~sphere $S^4$, refer to $\overline{\mathbb{C}^2}$ as 
the~spherical compactification of $\mathbb{C}^2$. Futher,  we denote by $\overline{X}$ the~closure in~$\overline{\mathbb{C}^2}$ of $X\subset\mathbb{C}^2$.

Since $(\mathbb{C}^{\times})^2\setminus V = \mathbb{C}^2\setminus (V\cup L)= \overline{\mathbb{C}^2}\setminus\overline{V\cup L},$
one has 
\begin{equation*}
H_2((\mathbb{C}^{\times})^2\setminus V) = H_2(\overline{\mathbb{C}^2}\setminus\overline{V\cup L}).
\end{equation*}
Next, by the~Alexander-Pontryagin duality~\cite{Pontr},
$H_2(\overline{\mathbb{C}^2}\setminus\overline{V\cup L})\cong H_1(\overline{V\cup L}).$
Thus, 
$$
H_2((\mathbb{C}^{\times})^2\setminus V)\cong H_1(\overline{V\cup L}).
$$
Note that $\overline{V\cup L}$ is homeomorphic to the~topological sum of three Riemann 
surfaces defined by $P(z,w)=0$, $z=0$ and $w=0$ with certain points identified (see Fig.~3). 

\begin{figure}[ht]
\label{fig:3}
\centering
\includegraphics[scale=.8]{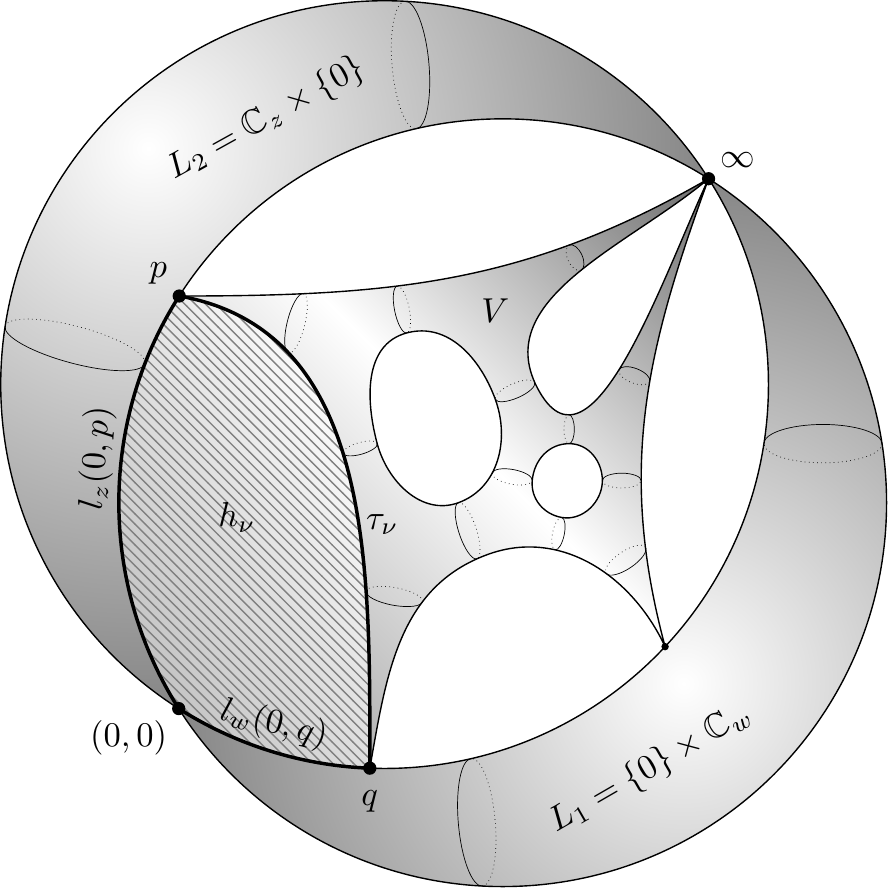}
\caption{\small  Illustration to the~proof of the~main theorem.}
\end{figure}

Recall that the~family $\{\Gamma_\nu\}$ consists of toric cycles in $(\mathbb{C}^{\times})^2\setminus V$ associated with components~$E_\nu$ of the~complement $\mathbb{R}^2\setminus \mathscr{A}_V$. We are going to construct a~family of 1-cycles~$\sigma_\nu$ in $\overline{V\cup L}$
dual to the~family $\{\Gamma_\nu\}$ in the folowing sense
\begin{equation}
\label{eq:link}
	\textup{link}(\sigma_\mu, \Gamma_\nu)=\pm\delta_{\mu\nu}=
	\begin{cases}
   		0,\,& \text{if $\mu\neq\nu$}, \\
   		\pm1,\,& \text{if $\mu=\nu$}.
  	\end{cases}
\end{equation}
Existence of  the family~$\{\sigma_\nu\}$ with such property  of pairing implies, obviously, homological
independence of the~families $\{\Gamma_\nu\}$ and $\{\sigma_\nu\}$.

By Lemma~\ref{lem:2} we know that the~image~$\Arg\circ\Log^{-1}(\partial E_\nu)$  is a single point $(\varphi_\nu, \psi_\nu)$. Therefore we can define the lifting of $\partial E_\nu$ to the curve~$V$ as 
\begin{equation*}
	\tau_\nu=\{(e^{x+i \varphi_\nu}, e^{y+i \psi_\nu}): (x,y)\in \partial E_\nu\}\subset (\mathbb{C}^{\times})^2,
\end{equation*}
that is $\tau_\nu=V\cap\Log^{-1}(\partial E_\nu)$. Now, we need to construct compact cycles $\sigma_\nu$
 in $\overline{V\cup L}$ using $\tau_\nu$.

Consider the boundary $\partial \tau_\nu =\overline{\tau}_\nu \setminus (\mathbb{C}^{\times})^2$ of the lifting
 in $\overline{\mathbb{C}^2}$. Its cardinality may be $0$, $1,$ or $2$. For instance, when $E_{\nu}$  is a bounded
 component, $\partial \tau_\nu$ is empty. In this case, we put $\sigma_{\nu}=\tau_\nu$.

Let $E_\nu$ be an unbounded component. The set $\partial \tau_\nu$ consists of one or two points 
in the ray $\overline{\{t(e^{i\varphi_\nu},0): t\geq 0\}}\subset \overline{L_2}$ or in the ray
$\overline{\{t(0,e^{i\psi_\nu}): t\geq 0\}}\subset \overline{L_1}$.

The particular configuration of points in $\partial \tau_\nu$ depends on the recession cone $C(E_\nu)$ 
of the component~$E_\nu$. Since~$C(E_\nu)$ is a~sector (possibly degenerated to a~ray), its position in $\mathbb{R}^2$ can be described by the~pair $(u,v)\in T^2=S^1\times S^1$, where $S^1$ is the~unit circle. In other words,  all the~possible shapes of $E_\nu$ can be identified with a~subset on the~torus $T^2$. (In some sense, $T^2$ is the~configurational space of components $E_\nu$).

\begin{figure}[h]
\label{fig:4}
\centering
\includegraphics[scale=.63]{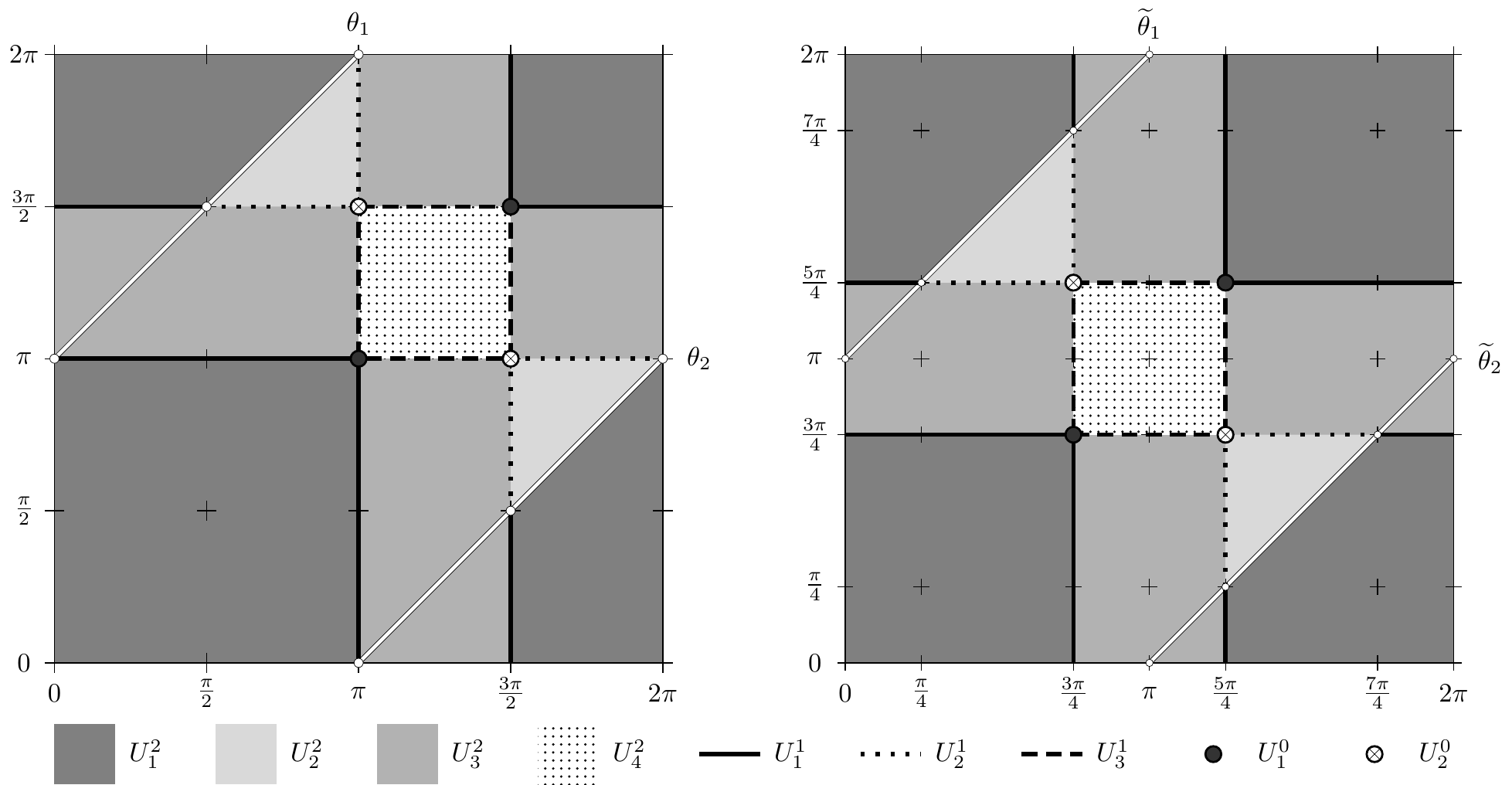}
\caption{\small  Partition~(\ref{eq:part}) of the~torus $T^2$. 
Components $U_j^i$ are given in local coordinates $0\leq\theta_1,\theta_2< 2\pi$ 
(left; the~angles $\theta_1$, $\theta_2$ are measured from the~horizontal coordinate axis in $\mathbb{R}^2$ to the~vectors~$u, v$  counter-clockwise) and $0\leq\tilde{\theta}_1,\tilde{\theta}_2<2\pi$ 
(right; the~angles $\tilde{\theta}_1$, $\tilde{\theta}_2$ are measured from the~diagonal of the~positive quadrant in $\mathbb{R}^2$ to the~vectors~$u, v$  counter-clockwise). Cuts $|\theta_2-\theta_1|=\pi$
(or $|\tilde{\theta}_2-\tilde{\theta}_1|=\pi$) corresponds to the~degeneration of the~Newton polytope $\Delta_P$.
}
\end{figure}

In order to describe constructions of $\sigma_\nu$ for all the~possible shapes of~$E_\nu$,
we consider the~partition of the~torus
\begin{equation}
\label{eq:part}
	T^2=	U_1^0\sqcup U_2^0\sqcup U_1^1\sqcup U_2^1\sqcup U_3^1\sqcup 
			U_1^2\sqcup U_2^2\sqcup U_3^2\sqcup U_4^2, 
\end{equation}
where components~$U_j^i$ are defined as in Fig.~4. Table~1 establishes the~correspondence between components~$U_j^i$ of the~partition, possible sets $\partial \tau_\nu$ and constructions of the~cycle~$\sigma_\nu$.
The construction of $\sigma_\nu$ involves the lifting~$\tau_\nu$ and segments $l_z(a,b)$, $l_w(a,b)$  in the~closure of the rays $\{t(e^{i\varphi_\nu},0): t\geq 0\}\subset \overline{L_2}$, $\{t(0,e^{i\psi_\nu}): t\geq 0\}\subset \overline{L_1}$. The~segments $l_z(a,b)$, $l_w(a,b)$ join points (possibly infinite)
$a,b$ on the corresponding rays. We denote by $p_i$ points in $\overline{V}\cap L_2$, by $q_j$ points
in $\overline{V}\cap L_1$ and by $O$ the origin $(0,0)$.

 For example, $U_2^0$ consists of two points, each encoding the~same recession cone
 generated by $(-1,0)$ and $(0,-1)$. Thus, $\partial \tau_\nu$ consists of two points $p\in \overline{V}\cap L_2$ and $q\in \overline{V}\cap L_1$, and to construct $\sigma_\nu$ one need to add $l_z(O,p)$ and $l_w(O,q)$ to $\tau_\nu$.  (See Fig.~3).

\begin{table}[tp]%
\caption{The correspondence between components~$U_j^i$ of the~partition~(\ref{eq:part}),
possible sets $\partial \tau_\nu$ and constructions of the~cycle~$\sigma_\nu$.}
\centering %
\begin{tabular}{ccccc}
\toprule %
&component of $T^2$ 		& 	$\partial \tau_\nu$			&construction of $\sigma_\nu$									&	\\\midrule%
	
&$U_1^2$	& 	$\{\infty\}$		&	$\tau_\nu\cup\{\infty\}$ 			&    \\
&			&						&													&	\\\midrule%

&$U_2^2$	& 	$\{\infty\}$		&	$\tau_\nu\cup\overline{(l_z(O,\infty)\cup l_w(O,\infty))}$ &\\
&			&						&														&\\\midrule%

&$U_3^2$	&	$\{O,\infty\}$	&	either $\tau_\nu\cup\overline{l_z(O,\infty)}$	&\\
&			&						&	or $\tau_\nu\cup\overline{l_w(O,\infty)}$	&\\\midrule%

&$U_4^2$	&	$\{O\}$			&	$\tau_\nu\cup\{O\}$					&\\
&			&						&														&\\\midrule%

&$U_1^1$	&   either $\{p,\infty\}$		&	$\tau_\nu\cup\overline{l_z(p,\infty)}$	&\\
&			&	or $\{q,\infty\}$		&	$\tau_\nu\cup\overline{l_w(q,\infty)}$	&\\\midrule

&$U_2^1$	&	either $\{p,\infty\}$		&	$\tau_\nu\cup l_z(O,p)\cup\overline{l_w(O,\infty)}$	&\\
&			&	or $\{q,\infty\}$		&	$\tau_\nu\cup l_w(O,q)\cup\overline{l_z(O,\infty)}$	&\\\midrule

&$U_3^1$	&	either $\{O,p\}$		&	$\tau_\nu\cup l_z(O,p)$	&\\
&			&	or $\{O,q\}$		&	$\tau_\nu\cup l_w(O,q)$	&\\\midrule

&$U_1^0$	&	either $\{p_1,p_2\}$		&	$\tau_\nu\cup l_z(p_1,p_2)$	&\\
&			&	or $\{q_1,q_2\}$		&	$\tau_\nu\cup l_w(q_1,q_2)$	&\\\midrule

&$U_2^0$	&	$\{p,q\}$			&	$\tau_\nu\cup l_z(O,p)\cup l_w(O,q)$	 &\\
&			&						&													 &\\\bottomrule
\end{tabular}
\end{table}

Let us write the~left-hand side of~(\ref{eq:link}) 
\begin{equation*}
	\textup{link}(\sigma_\mu, \Gamma_\nu)=\textup{ind}(h_\mu, \Gamma_\nu)
\end{equation*}
as the~intersection index in $\overline{\mathbb{C}^2}$ of the~cycle $\Gamma_\nu$ and the~$2$-dimensional chain
\begin{equation*}
	h_\mu=\{(e^{x+i \varphi_\mu}, e^{y+i \psi_\mu}): (x,y)\in  E_\mu\}\cup \sigma_\mu, 
\end{equation*}
such that $\partial h_\mu=\sigma_\mu.$ The toric cycle $\Gamma_\nu$ has the~form $\Log^{-1}(x,y)$, where $(x,y)\in E_\nu$. So the~chain $h_\mu$ does not intersect $\Gamma_\nu$ if $\mu\neq \nu$. Meanwhile, if $\mu=\nu$ the~cycle $\Gamma_\nu$ intersects $h_\mu$ in a~single point $(e^{x+i\varphi_\nu},e^{y+i\psi_\nu})$. Therefore, 
$\textup{ind}(h_\nu, \alpha(\Gamma_\mu))=\pm\delta_{\nu\mu}$. Q.E.D.

\subsection*{Acknowledgments} 
 The first author was financed by the~grant of the~President of the~Russian Federation for state support of leading scientific schools NSh-9149.2016.1. 
 The second author was supported by the~grant of the~Russian Federation Government for research under supervision of leading scientist at Siberian Federal University, contract №14.Y26.31.0006.

\end{document}